\newtheorem{theorem}{Theorem}[section]
\newtheorem{lemma}[theorem]{Lemma}
\theoremstyle{definition}
\newtheorem{conjecture}[theorem]{Conjecture}
\theoremstyle{remark}
\newtheorem{remark}[theorem]{Remark}
\numberwithin{equation}{section}
\begin{document}

\title[Comparison Theorem for Manifolds with Mean Convex Boundary]{A Sharp Comparison Theorem for Compact Manifolds with Mean Convex Boundary}

\author[Martin Li]{Martin Man-chun Li}
\address{Mathematics Department, University of British Columbia, 1984 Mathematics Road, Vancouver, BC V6T 1Z2, Canada}
\email{martinli@math.ubc.ca}

\begin{abstract}
Let $M$ be a compact $n$-dimensional Riemannian manifold with nonnegative Ricci curvature and mean convex  boundary $\partial M$. Assume that the mean curvature $H$ of the boundary $\partial M$ satisfies $H \geq (n-1) k >0$ for some positive constant $k$. In this paper, we prove that the distance function $d$ to the boundary $\partial M$ is bounded from above by $\frac{1}{k}$ and the upper bound is achieved if and only if $M$ is isometric to an $n$-dimensional Euclidean ball of radius $\frac{1}{k}$.
\end{abstract}

\maketitle

%%%%%%%%%%%%%%%%%
% Section 1 Introduction
%%%%%%%%%%%%%%%%%

\section{Introduction}

By a classical theorem of Bonnet and Myers, if a complete $n$-dimensional Riemannian manifold $M$ has Ricci curvature at least $(n-1)k$, where $k>0$ is a constant, then the diameter of $M$ is at most $\frac{\pi}{\sqrt{k}}$. Applying this result to the universal cover $\tilde{M}$, we see that such manifolds must be compact and have finite fundamental group. In \cite{Cheng75}, Cheng proved the rigidity theorem that if the diameter is equal to $\frac{\pi}{\sqrt{k}}$, then $M$ is isometric to the $n$-sphere with constant sectional curvature $k$. 

In this paper, we prove a similar result for compact manifolds with nonnegative Ricci curvature and mean convex boundary. Our main result is the following

\begin{theorem}
Let $M^n$ be a complete $n$-dimensional ($n \geq 2$) Riemannian manifold with nonnegative Ricci curvature and mean convex boundary $\partial M$. Assume the mean curvature $H$ of $\partial M$ with respect to the inner unit normal satisfies $H \geq (n-1)k >0$ for some constant $k>0$. Let $d$ denote the distance function on $M$. Then, 
\begin{equation}
\sup_{x \in M} d(x,\partial M) \leq \frac{1}{k}.
\end{equation}
Furthermore, if we assume that $\partial M$ is compact, then $M$ is also compact and equality holds in (1.1)  if and only if $M^n$ is isometric to an $n$-dimensional Euclidean ball of radius $\frac{1}{k}$.
\end{theorem}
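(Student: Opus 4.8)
We outline the argument. The plan is to establish the distance bound by a Riccati comparison along geodesics normal to $\partial M$, and then to treat the rigidity case by comparing $\rho:=d(\cdot,\partial M)$ with the distance to a point at which $\rho$ is maximal. For the bound, fix $p\in\partial M$ and let $\gamma(t)=\exp_p(t\nu)$ be the unit-speed geodesic leaving $\partial M$ orthogonally ($\nu$ the inner normal), with $\tau(p)$ its cut time for the distance to $\partial M$. On $[0,\tau(p))$ the function $\rho$ is smooth near $\gamma$ with $\rho(\gamma(t))=t$, and $m(t):=-\Delta\rho(\gamma(t))$ is the mean curvature of $\{\rho=t\}$ at $\gamma(t)$, so that $m(0)$ equals the mean curvature $H(p)\ge(n-1)k$ of $\partial M$ at $p$. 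The Bochner formula for $\rho$, together with $|\nabla\rho|\equiv1$, the inequality $|\nabla^2\rho|^2\ge(\Delta\rho)^2/(n-1)$, and $\operatorname{Ric}\ge0$, yields the Riccati inequality $m'\ge m^2/(n-1)$. Comparing with the solution $\bar m(t)=(n-1)/(\tfrac1k-t)$ of the corresponding equation, which blows up at $t=\tfrac1k$, gives $m\ge\bar m$ while both are finite; since $m$ is finite on $[0,\tau(p))$ this forces $\tau(p)\le\tfrac1k$. Because every point of $M$ lies on such a normal geodesic at parameter at most $\tau(p)$, this proves (1.1). Running the same estimate with Calabi's trick, using the smooth level sets $\{\rho=\varepsilon\}$ as barriers, also shows $\Delta\rho\le-(n-1)/(\tfrac1k-\rho)$ in the barrier sense on $\{\rho<\tfrac1k\}$.

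Now suppose $\partial M$ is compact and equality holds in (1.1). Since $\rho\le\tfrac1k$ and $\partial M$ is compact, $M$ is a complete bounded manifold with boundary, hence compact, and $\rho$ attains its maximum $\tfrac1k$ at some $x_0\in M\setminus\partial M$. Set $r:=d(\cdot,x_0)$; the Laplacian comparison for $\operatorname{Ric}\ge0$ gives $\Delta r\le(n-1)/r$ in the barrier sense on $M\setminus\{x_0\}$. Consider $u:=\rho+r-\tfrac1k$. By the triangle inequality $\tfrac1k=d(x_0,\partial M)\le d(x,x_0)+d(x,\partial M)$, so $u\ge0$ on $M$; and along a minimizing normal segment $\gamma$ from a nearest point $p_0\in\partial M$ to $x_0$ one has $\rho(\gamma(t))=t$ and $r(\gamma(t))=\tfrac1k-t$, so $u\equiv0$ on $\gamma$. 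Since $r\ge\tfrac1k-\rho$ (i.e. $u\ge0$), the two comparisons combine to $\Delta u\le-(n-1)/(\tfrac1k-\rho)+(n-1)/r\le0$ in the barrier sense on the interior away from $x_0$. As $u\ge0$ vanishes at interior points of $\gamma$ and is a barrier supersolution, the strong maximum principle forces $u\equiv0$ on $M$; that is, $d(\cdot,\partial M)+d(\cdot,x_0)\equiv\tfrac1k$.

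This identity forces the geometry to be that of a ball. It implies that every geodesic normal to $\partial M$ remains minimizing to $\partial M$ exactly up to time $\tfrac1k$ and passes through $x_0$ then; hence $\partial M=\{r=\tfrac1k\}$, the cut locus of $x_0$ in $M$ lies in $\partial M$, and $r$ is smooth on $M\setminus(\{x_0\}\cup\partial M)$. There $u\equiv0$ gives $\rho=\tfrac1k-r$ and $\Delta r=-\Delta\rho\ge(n-1)/r$, which together with $\Delta r\le(n-1)/r$ yields $\Delta r\equiv(n-1)/r$. Equality in the Laplacian comparison then forces, along each geodesic from $x_0$, that $\operatorname{Ric}(\nabla r,\nabla r)=0$ and $\nabla^2r=\tfrac1r(g-dr\otimes dr)$; in geodesic polar coordinates about $x_0$ this means $g=dr^2+r^2h$ for a fixed metric $h$ on $S^{n-1}$, and smooth extension of $g$ across $x_0$ forces $h$ to be the round metric. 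Hence $(M,g)$ is isometric to the flat closed ball of radius $\tfrac1k$. The converse is immediate: such a ball has $\operatorname{Ric}\equiv0$, boundary mean curvature $(n-1)k$, and $\sup\rho=\tfrac1k$.

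The main difficulty is the rigidity step: justifying the strong maximum principle for the merely Lipschitz function $u$ (handling the cut loci of $\rho$ and of $x_0$ through the barrier/viscosity formulation, and checking the relevant level sets are connected), and then upgrading the scalar identity $\Delta r\equiv(n-1)/r$ to the warped-product normal form together with the smooth extension at $x_0$. A subsidiary point, used already for (1.1), is the standard fact that on a complete manifold with boundary each point is joined to $\partial M$ by a minimizing geodesic meeting it orthogonally.
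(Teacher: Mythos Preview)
Your proof is correct, and for the inequality it is essentially the same as the paper's: your Riccati comparison $m'\ge m^2/(n-1)$ is the infinitesimal form of the paper's second-variation argument with the linear test fields $V_i(s)=\tfrac{s}{\ell}E_i(s)$, and both give $\ell\le\tfrac{1}{k}$ immediately.

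For the rigidity, however, you take a genuinely different route. The paper works on $\partial M$ with the function $d(x_0,\cdot)$: it shows the set $S=\{q\in\partial M:\,d(x_0,q)=1/k\}$ is open and closed in $\partial M$ (openness via the hypersurface maximum principle at non-conjugate points, and via a Calabi barrier computation of $\Delta_{\partial M}d(x_0,\cdot)\le 0$ at conjugate points), invokes Lemma~2.1 to know $\partial M$ is connected, concludes $M=B_{1/k}(x_0)$, and then finishes with an integrated Laplacian comparison plus Ros' inequality (Lemma~2.2). You instead work in the interior with $u=d(\cdot,\partial M)+d(\cdot,x_0)-\tfrac{1}{k}$, combine the two barrier Laplacian bounds to get $\Delta u\le 0$, and use Calabi's strong maximum principle (as in Cheng's diameter rigidity) to force $u\equiv 0$; then you read off the warped product $g=dr^2+r^2h$ directly from the equality case of the Laplacian comparison and identify $h$ as round from smoothness at $x_0$. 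Your approach is more self-contained in that it avoids the external input of Ros' theorem, and it mirrors Cheng's argument more closely; the paper's approach trades that for a shorter endgame once $M=B_{1/k}(x_0)$ is known. The technical weight lands in different places: the paper must justify the barrier inequality for $d(x_0,\cdot)$ restricted to $\partial M$ (which it does carefully), while you must justify the barrier inequality for $d(\cdot,\partial M)$ at its cut locus, rule out interior cut points of $x_0$ (this follows once $u\equiv 0$ by concatenating minimizing segments through a putative cut point to get two minimizers from $x_0$ to a single boundary point, contradicting uniqueness of the inner normal), and check the smooth-extension argument at $x_0$. You flag these honestly; all are fillable by standard means.
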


\begin{remark}
For any isometric embedding of a Riemannian $m$-manifold $N$ into a metric space $X$, Gromov \cite{Gromov83} defined the \emph{filling radius}, Fill Rad $(N \subset X$), to be the infimum of those numbers $\epsilon >0$ for which $N$ bounds in the $\epsilon$-neighborhood $U_\epsilon(N) \subset X$, that is the inclusion homomorphism of the $m$-th homology (over $\mathbb{Z}$ or $\mathbb{Z}_2$) $H_m(N) \to H_m(U_\epsilon (N))$ vanishes. Therefore, we can restate the conclusion of Theorem 1.1 as Fill Rad $(\partial M \subset M) \leq \frac{1}{k}$ and equality holds if and only if $M$ is the Euclidean ball of radius $\frac{1}{k}$. 
\end{remark}

Note that under the curvature assumptions in Theorem 1.1, the complete manifold $M$ may be \emph{non-compact}. However, if we put a stronger convexity assumption on $\partial M$, then the boundary convexity could force $\partial M$ to be compact and hence $M$ would also be compact. In \cite{Hamilton94}, Hamilton proved that any convex hypersurface in $\mathbb{R}^n$ with pinched second fundamental form is compact. We conjecture that the result can be generalized to manifolds with nonnegative Ricci curvature.

\begin{conjecture}
Let $M^n$ be a complete Riemannian $n$-manifold with nonempty boundary $\partial M$. Assume $M$ has nonnegative Ricci curvature and $\partial M$ is uniformly convex with respect to the inner unit normal, i.e. the second fundamental form $h$ of $\partial M$ satisfies $h \geq k>0$ for some constant $k$. Then, $M$ is compact and $\pi_1(M)$ is finite.
\end{conjecture}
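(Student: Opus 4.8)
The plan is to reduce the conjecture to a single statement about $\partial M$ alone, sketch the variational argument one would use to establish it, and be candid about the essential difficulty.

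\textbf{Reduction.} Since $h\ge k$ forces $H=\operatorname{tr}_{g_{\partial M}}h\ge (n-1)k$, Theorem 1.1 applies to $M$: $\sup_{x\in M}d(x,\partial M)\le 1/k$, so $M$ lies in the closed $1/k$-neighbourhood of $\partial M$; in particular, once $\partial M$ is known to be compact, $M$ is compact (this last implication is exactly the second assertion of Theorem 1.1). For the fundamental group, pass to the Riemannian universal cover $q:\widetilde M\to M$: it is complete, satisfies $\operatorname{Ric}_{\widetilde M}\ge 0$, and, since $q$ is a local isometry, its boundary $\partial\widetilde M=q^{-1}(\partial M)$ has second fundamental form $\ge k$ with respect to the inner unit normal. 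Hence everything follows once we prove
\begin{quote}
$(\star)$: if $N$ is a complete Riemannian manifold with nonempty boundary, $\operatorname{Ric}_N\ge 0$, and the second fundamental form of $\partial N$ with respect to the inner unit normal satisfies $h\ge k>0$, then $\partial N$ is compact.
\end{quote}
Indeed, $(\star)$ applied to $M$ gives $\partial M$ compact, hence $M$ compact; $(\star)$ applied to $\widetilde M$ gives $\partial\widetilde M$ compact, whence Theorem 1.1 applied to $\widetilde M$ forces $\widetilde M$ compact, and a connected manifold with compact universal cover has finite fundamental group.

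\textbf{Strategy for the boundary statement.} As $\partial N$ is a complete manifold without boundary, $(\star)$ amounts to a diameter bound: I would try to show that every unit-speed minimizing geodesic $\gamma:[0,L]\to\partial N$ of the induced metric has $L\le C(n,k)$ (and that $\partial N$ has only finitely many components, which one expects from the same analysis). Regarded as a curve in $N$, such a $\gamma$ has acceleration $\nabla^N_{\gamma'}\gamma'=h(\gamma',\gamma')\,\nu\ge k\,\nu$, where $\nu$ is the inner unit normal along $\gamma$: it turns inward at rate at least $k$. Flow $\gamma$ inward by $\Gamma(s,t)=\exp_{\gamma(s)}(t\,\nu_{\gamma(s)})$, obtaining the inner parallel curves $\gamma_t=\Gamma(\cdot,t)$; by the Riccati comparison underlying Theorem 1.1, the mean curvature $H_t$ of the parallel hypersurface $\Sigma_t=\{d(\cdot,\partial N)=t\}$ satisfies $H_t\ge (n-1)k/(1-kt)$, so each normal geodesic $t\mapsto\Gamma(s,t)$ meets a focal point of $\partial N$ at some $\tau(s)\le 1/k$, and $\Gamma$ is smooth for $t<\min_s\tau(s)$. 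The first variation of length is
\begin{equation*}
\frac{d}{dt}\Big|_{t=0}L(\gamma_t)=-\int_0^L h(\gamma',\gamma')\,ds\le -kL ,
\end{equation*}
the endpoint terms vanishing because $\nu\perp\gamma'$. The aim is to control $L(\gamma_t)$ for all $t<\min_s\tau(s)$ through the index form of $\Gamma$, so as to conclude that $\gamma$ must contract — the parallel curves' length reaching $0$, or the family becoming singular — by time $\le 1/k$ unless $L$ was already small; more robustly, one would run this on the whole family $\Sigma_t$, estimating $\operatorname{diam}(\Sigma_t)$ from the data transported along normal geodesics and using the blow-up of $H_t$ as $t\uparrow 1/k$, then reading off a bound on $\operatorname{diam}(\Sigma_0)=\operatorname{diam}(\partial N)$.

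\textbf{The main obstacle.} Every index-form or Jacobi-field computation along the inward flow of a subset of $\partial N$ produces the full mixed curvature term $R_N(X,\nu,X,\nu)$ for $X$ tangent to $\partial N$ — equivalently, $\operatorname{Ric}_N\ge 0$ neither keeps the parallel hypersurfaces convex nor controls the higher-order behaviour of $L(\gamma_t)$ — and this term may be arbitrarily large and positive under the stated hypotheses. This is precisely why the statement is a conjecture, and any proof must circumvent it. Two avenues seem natural: (i) upgrade Theorem 1.1 to a quantitative almost-rigidity statement — if some normal geodesic issuing from $\partial N$ stays minimizing up to length close to $1/k$, then a definite ball about its endpoint is Gromov--Hausdorff close to a Euclidean half-ball — and then propagate this control along $\partial N$ to force it to ``close up'' within bounded diameter; or (ii) pass to the double $\widehat N=N\cup_{\partial N}N$, which carries nonnegative Ricci curvature in the synthetic $\mathrm{RCD}(0,n)$ sense because $h\ge 0$, observe that Theorem 1.1 places all of $\widehat N$ within $1/k$ of the codimension-one seam, and invoke the structure theory of nonnegatively curved (smooth or $\mathrm{RCD}$) spaces — for instance a Cheeger--Gromoll type splitting obstruction preventing a ray or line from persisting in so thin a neighbourhood of a hypersurface — to conclude that $\widehat N$, hence $\partial N$, is compact.
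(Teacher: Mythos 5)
The statement you are addressing is stated in the paper as Conjecture 1.3 and is left open there, so there is no proof of it in the paper to compare against; the only question is whether your argument closes the conjecture, and it does not. Your reduction is correct and worth recording: $h\ge k$ gives $H\ge (n-1)k$, so the first part of Theorem 1.1 confines $M$ (and its universal cover $\widetilde M$, which inherits completeness, $\mathrm{Ric}\ge 0$ and $h\ge k$ because the covering map is a local isometry) to the closed $1/k$-neighbourhood of its boundary; hence compactness of $\partial M$ and of $\partial\widetilde M$ would yield compactness of $M$ and $\widetilde M$, and a compact universal cover forces $\pi_1(M)$ to be finite. Everything therefore rides on your statement $(\star)$, the compactness of the boundary (together with finiteness of its set of components, which a per-component diameter bound alone does not deliver).

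The gap is that $(\star)$ is never proved, and you say so yourself. The inward-flow strategy stops exactly where you indicate: the first variation $\frac{d}{dt}L(\gamma_t)\big|_{t=0}\le -kL$ is a one-jet statement at $t=0$, and propagating it forward requires controlling the second fundamental form of the parallel hypersurfaces $\Sigma_t$ in every tangent direction, which brings in the full mixed curvature $R(X,\nu,X,\nu)$; the hypothesis $\mathrm{Ric}\ge 0$ only controls its trace, so convexity of $\Sigma_t$ can be lost immediately and $L(\gamma_t)$ need not keep decreasing (the Riccati comparison you quote, $H_t\ge (n-1)k/(1-kt)$, only controls the trace for the same reason, which is why it yields the focal-radius bound of Theorem 1.1 but nothing about intrinsic diameters of the $\Sigma_t$). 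The two alternative avenues are programmatic rather than proofs: the quantitative almost-rigidity version of Theorem 1.1 is not established, and the doubling route, even granting that the double is $\mathrm{RCD}(0,n)$, produces at best a ray lying within $1/k$ of the seam, which is not by itself a contradiction — the Cheeger--Gromoll splitting theorem requires a line, and the seam is not totally geodesic, so no splitting obstruction is actually invoked. What you have, then, is a correct and clean reduction of Conjecture 1.3 to the single statement $(\star)$, plus an accurate diagnosis of why the obvious variational attack fails; that is a useful observation, but the conjecture remains open.
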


Manifolds satisfying the assumptions in Conjecture 1.3 have been studied by several authors. Some rigidity results were obtained in \cite{Schroeder-Strake89} and \cite{Xia97}. In \cite{Escobar99}, J. Escobar gave upper and lower estimates for the first nonzero Steklov eigenvalue for these manifolds with boundary. However, all these results are proved under the assumption that $M$ is compact. Conjecture 1.3 above would imply that this assumption is void and these manifolds have finite fundamental group.

%%%%%%%%%%%%%%%%%%
%Section 2 Preliminaries
%%%%%%%%%%%%%%%%%%

\section{Preliminaries}

In this section, we collect some known facts which will be used in the proof of Theorem 1.1. Let $M$ be a complete $n$-dimensional Riemannian manifold with nonempty boundary $\partial M$. We denote by $\langle \; , \; \rangle$ the metric on $M$ as well as that induced on $\partial M$. Suppose $\gamma:[0,\ell] \to M$ be a geodesic in $M$ parametrized by arc length such that $\gamma(0)$ and $\gamma(\ell)$ lie on $\partial M$ and $\gamma(s)$ lies in the interior of $M$ for all $s \in (0,\ell)$. Assume that $\gamma$ meets $\partial M$ orthogonally, that is, $\gamma'(0) \perp T_{\gamma(0)} \partial M$ and $\gamma'(\ell) \perp T_{\gamma(\ell)} \partial M$. Hence, $\gamma$ is a critical point of the length functional as a free boundary problem. We call such $\gamma$ a \emph{free boundary geodesic}. For any normal vector field $V$ along $\gamma$, the orthogonality condition implies that $V$ is tangent to $\partial M$ at $\gamma(0)$ and $\gamma(\ell)$, hence is an admissible variation to the free boundary problem. A direct calculation give the second variation formula of arc length
\begin{align}
\delta^s \gamma(V,V)=&\int_0^\ell \left( |V'(s)|^2 -|V(s)|^2 K(\gamma'(s),V(s)) \right) \; ds \\
&+ \langle \overline{\nabla}_{V(\ell)} V(\ell), \gamma'(\ell) \rangle -\langle \overline{\nabla}_{V(0)} V(0), \gamma'(0) \rangle, \notag
\end{align}
where $\overline{\nabla}$ is the Riemannian connection on $M$, and $K(u,v)$ is the sectional curvature of the plane spanned by $u$ and $v$ in $M$. 

Let $N$ be the inner unit normal of $\partial M$ with respect to $M$. The second fundamental form $h$ of $\partial M$ with respect to $N$ is defined by $h(u,v)=\langle \overline{\nabla}_u v ,N \rangle$ for $u,v$ tangent to $\partial M$. The mean curvature $H$ of $\partial M$ with respect to $N$ is defined as the trace of $h$, that is $H=\sum_{i=1}^{n-1} h(e_i,e_i)$ for any orthonormal basis $e_1,\ldots,e_{n-1}$ of the tangent bundle $T\partial M$. The principal curvatures of $\partial M$ are defined to be the eigenvalues of $h$. Using a Frankel-type argument as in \cite{Lawson70}, we have the following Lemma.

\begin{lemma}
Let $M$ be a compact, connected $n$-dimensional Riemannian manifold with nonempty boundary $\partial M$. Suppose $M$ has nonnegative Ricci curvature and the mean curvature $H$ of $\partial M$ with respect to the inner unit normal satisfies $H \geq (n-1)k>0$ for some positive constant $k$. Then, $\partial M$ is connected and the map
\begin{equation*}
\pi_1 (\partial M) \stackrel{i_*}{\longrightarrow} \pi_1 (M)
\end{equation*}
induced by inclusion is surjective, i.e. $\pi_1(M,\partial M)=0$. \end{lemma}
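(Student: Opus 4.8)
The plan is to run a Frankel-type comparison argument. Assuming one of the two conclusions fails, I will produce a length-minimizing \emph{free boundary geodesic} $\gamma:[0,\ell]\to M$ (or into a suitable Riemannian covering of $M$) joining two disjoint pieces of the boundary, meeting $\partial M$ orthogonally at both endpoints, and then contradict its minimality by exhibiting an admissible variation with strictly negative second variation. The key computation is this: parallel transport an orthonormal basis $e_1,\dots,e_{n-1}$ of $T_{\gamma(0)}\partial M$ along $\gamma$ to obtain parallel normal fields $E_1,\dots,E_{n-1}$ along $\gamma$; since $\gamma'$ is parallel and $\gamma$ meets $\partial M$ orthogonally at $\gamma(\ell)$, the vectors $E_1(\ell),\dots,E_{n-1}(\ell)$ form an orthonormal basis of $T_{\gamma(\ell)}\partial M$, and each $E_i$ is an admissible (normal) variation field. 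Feeding $V=E_i$ into the second variation formula (2.1): the terms $|E_i'|^2$ vanish, $\sum_i|E_i|^2 K(\gamma',E_i)=\mathrm{Ric}(\gamma',\gamma')\ge 0$ because $\{E_1,\dots,E_{n-1},\gamma'\}$ is an orthonormal frame, and the two boundary terms, using that $\gamma'(0)$ is the inner and $\gamma'(\ell)$ the outer unit normal at the respective endpoints, sum to $-H(\gamma(0))-H(\gamma(\ell))\le -2(n-1)k$. Hence $\sum_{i=1}^{n-1}\delta^s\gamma(E_i,E_i)\le -2(n-1)k<0$, so some $E_i$ has $\delta^s\gamma(E_i,E_i)<0$, contradicting that $\gamma$ is length-minimizing in its class.

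It remains to produce such a $\gamma$ in each case. For connectedness of $\partial M$: if $\partial M$ had at least two components, choose components $\Sigma_1,\Sigma_2$ realizing the minimal distance between distinct components (possible since $\partial M$ is compact with finitely many components), and let $\gamma$ be a minimizing geodesic from $\Sigma_1$ to $\Sigma_2$ (which exists since $M$ is compact). The first variation forces $\gamma$ to meet $\partial M$ orthogonally at both endpoints, and minimality of the pair $(\Sigma_1,\Sigma_2)$ forces $\gamma$ to remain in the interior for $0<s<\ell$ (otherwise it would touch $\partial M$ and yield either a shortcut from $\Sigma_1$ to $\Sigma_2$ or a strictly closer pair of components), so $\gamma$ is a free boundary geodesic and the computation above applies. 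For the surjectivity statement I first use that $\partial M$ is connected (just proved). Suppose $i_*$ is not surjective, and let $p:\hat M\to M$ be the connected Riemannian covering corresponding to the subgroup $G:=i_*(\pi_1(\partial M))\subsetneq\pi_1(M)$; then $\hat M$ again has nonnegative Ricci curvature and boundary with mean curvature $\ge (n-1)k$, being a local isometry. The component $\hat\Sigma_0$ of $p^{-1}(\partial M)$ through the base point maps homeomorphically onto $\partial M$, because its fundamental group, viewed in $\pi_1(\partial M)$, equals $i_*^{-1}(G)=\pi_1(\partial M)$, so the covering $\hat\Sigma_0\to\partial M$ has one sheet; in particular $\hat\Sigma_0$ is compact. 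Since the covering is nontrivial, $p^{-1}(x_0)$ has at least two points, hence $p^{-1}(\partial M)$ has a further component $\hat\Sigma_1$, which is closed and disjoint from $\hat\Sigma_0$. Then $d(\hat\Sigma_0,\hat\Sigma_1)>0$ is attained (compactness of $\hat\Sigma_0$ together with the Hopf--Rinow property of the complete length space $\hat M$), and taking $\hat\Sigma_1$ to be a component closest to $\hat\Sigma_0$, a minimizing geodesic from $\hat\Sigma_0$ to $\hat\Sigma_1$ is once more a free boundary geodesic, giving the same contradiction. The negative-variation competitor lives in $\hat M$, so no push-down to $M$ is required.

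The first- and second-variation bookkeeping and the parallel transport of the normal frame are routine. The step I expect to be the main obstacle is the existence and regularity of the minimizing free boundary geodesic in the surjectivity argument: one must verify that a genuine length-minimizer from $\hat\Sigma_0$ to $\hat\Sigma_1$ exists even though $\hat M$ may be noncompact (handled via compactness of the distinguished component $\hat\Sigma_0$, completeness of $\hat M$, and the Hopf--Rinow theorem for manifolds with boundary), and that this minimizer touches $\partial\hat M$ only at its two endpoints and does so orthogonally. Once $\gamma$ is in hand, the uniform lower bound $H\ge (n-1)k>0$ is exactly what lets the two boundary terms of (2.1) overcome the nonnegative curvature integral, which is the heart of the matter.
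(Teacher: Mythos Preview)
Your proof is correct and follows essentially the same Frankel-type argument as the paper: the core second-variation computation with parallel-transported normal frames is identical, yielding $\sum_i \delta^2\gamma(E_i,E_i)=-\int_0^\ell \mathrm{Ric}(\gamma',\gamma')\,ds - H_{\gamma(0)} - H_{\gamma(\ell)}<0$. The only minor difference is that for surjectivity the paper directly minimizes length in a nontrivial class of $\pi_1(M,\partial M)$ to produce the free boundary geodesic, whereas you pass to the covering corresponding to $i_*(\pi_1(\partial M))$ and minimize between boundary components there; these are equivalent formulations of the same construction.
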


\begin{proof}
We follow the argument given in \cite{Lawson70}. We show under the curvature assumptions, any free boundary geodesic must be unstable as a free boundary solution. To see this, let $\gamma:[0,\ell] \to M$ be a free boundary geodesic. Fix an orthonormal basic $e_1,\ldots,e_{n-1}$ of $T_{\gamma(0)} \partial M$, let $V_i(s)$ be the normal vector field along $\gamma$ obtained from $e_i$ by parallel translation, using the second variation formula (2.1), we have 
\begin{equation*}
\sum_{i=1}^{n-1} \delta^2 \gamma(V_i,V_i)=-\int_0^\ell \text{Ric}(\gamma'(s),\gamma'(s)) \; ds - H_{\gamma(\ell)} - H_{\gamma(0)} < 0
\end{equation*}
where Ric is the Ricci curvature of $M$. Therefore, $\delta^2 \gamma(V_i,V_i) <0$ for some $i$ and therefore $\gamma$ is unstable. 

Suppose $\partial M$ is not connected or $\pi_1(M,\partial M) \neq 0$. In either case, there exists a free boundary geodesic $\gamma$ which minimize length in his homotopy class in $\pi_1(M,\partial M)$, hence stable. This contradicts the fact that there is no stable free boundary geodesics in $M$. 
\end{proof}

We will use the following Lemma which is a special case of Theorem 1 in \cite{Ros87}.

\begin{lemma}
Let $M$ be a compact $n$-dimensional Riemannian manifold with nonempty boundary $\partial M$ and nonnegative Ricci curvature. If the mean curvature $H$ of $\partial M$ with respect to the unit inner normal satisfies
\begin{equation*}
H \geq \frac{n-1}{n} \frac{|\partial M|}{|M|},
\end{equation*} 
where $|\partial M|$ and $|M|$ denote the $(n-1)$- and $n$- dimensional volume of $\partial M$ and $M$ respectively, then $M^n$ is isometric to a Euclidean ball.
\end{lemma}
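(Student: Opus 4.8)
The plan is to follow Ros \cite{Ros87} and run the Reilly formula on the solution of an auxiliary torsion problem. We may assume $M$ is connected (otherwise one argues on each component; note that a disjoint union of equal balls also attains the hypothesis, so connectedness is genuinely needed for the stated conclusion). First I would solve the Dirichlet problem $\Delta f = 1$ on $M$, $f = 0$ on $\partial M$; standard elliptic theory gives a unique solution $f \in C^\infty(M)$. Since $\Delta f = 1 > 0$, the strong maximum principle gives $f < 0$ in the interior, and the Hopf boundary lemma gives $\partial_\nu f > 0$ everywhere on $\partial M$, where $\nu$ is the outward unit normal. As $f$ is constant on $\partial M$, its tangential gradient and tangential Laplacian vanish there, and $|\nabla f| = \partial_\nu f$ on $\partial M$; moreover the divergence theorem gives $\int_{\partial M} \partial_\nu f = \int_M \Delta f = |M|$.

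Next I apply the Reilly formula to $f$. Because $f$ vanishes identically on $\partial M$, the tangential boundary terms drop out and one is left with
\[
\int_M \bigl( (\Delta f)^2 - |\nabla^2 f|^2 - \text{Ric}(\nabla f,\nabla f) \bigr)\, dV \;=\; \int_{\partial M} H\,(\partial_\nu f)^2\, dA,
\]
where $H$ is the mean curvature with respect to the inner normal, exactly as in the paper. On the left, $(\Delta f)^2 \equiv 1$, the Cauchy--Schwarz inequality for the symmetric tensor $\nabla^2 f$ gives $|\nabla^2 f|^2 \ge (\Delta f)^2/n = 1/n$ pointwise, and $\text{Ric}(\nabla f,\nabla f)\ge 0$ by hypothesis; hence the left side is at most $\tfrac{n-1}{n}|M|$. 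Using $H \ge \tfrac{n-1}{n}\tfrac{|\partial M|}{|M|}$ and $(\partial_\nu f)^2 \ge 0$ on the right,
\[
\frac{n-1}{n}\,\frac{|\partial M|}{|M|}\int_{\partial M}(\partial_\nu f)^2\, dA \;\le\; \int_{\partial M} H\,(\partial_\nu f)^2\, dA \;\le\; \frac{n-1}{n}\,|M|,
\]
so $\int_{\partial M}(\partial_\nu f)^2 \le |M|^2/|\partial M|$. On the other hand, Cauchy--Schwarz on $\partial M$ together with $\int_{\partial M}\partial_\nu f = |M|$ gives $|M|^2 \le |\partial M|\int_{\partial M}(\partial_\nu f)^2$, i.e.\ the reverse inequality. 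Therefore every inequality used above is an equality.

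Now I read off the equality cases: equality in the Cauchy--Schwarz bound for $\nabla^2 f$ forces $\nabla^2 f = \tfrac1n g$ on all of $M$; also $\text{Ric}(\nabla f,\nabla f)\equiv 0$; $H \equiv \tfrac{n-1}{n}\tfrac{|\partial M|}{|M|}$ on $\partial M$ (using $\partial_\nu f > 0$ to cancel the weight); and $\partial_\nu f$ is a positive constant $a = |M|/|\partial M|$ on $\partial M$. From $\nabla^2 f = \tfrac1n g$ one gets $\nabla|\nabla f|^2 = \tfrac2n\nabla f$, hence $|\nabla f|^2 = \tfrac2n f + a^2$; in particular $f$ is strictly convex. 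A short argument with the gradient flow of $-\nabla f$ (which on $\partial M$ points strictly inward, and whose critical set $\{|\nabla f|=0\}=\{f=\min f\}$ consists of nondegenerate minima, hence isolated attractors with open basins inside the connected interior) shows $f$ has a unique interior critical point $p$, with $f(p) = -\tfrac{n}{2}a^2$. Along any unit-speed geodesic from $p$, $f$ has second derivative $\tfrac1n$ and vanishing first derivative at $p$, so $f = f(p) + \tfrac{1}{2n}\,d(\cdot,p)^2$ on $M$; substituting this back into $\nabla^2 f = \tfrac1n g$ and reading it off in geodesic polar coordinates centred at $p$ forces the radial metric components to satisfy $\partial_r g_r = \tfrac{2}{r} g_r$, hence $g = dr^2 + r^2 g_{S^{n-1}}$, the flat Euclidean metric, with $\partial M = \{f=0\}$ the round sphere of radius $\sqrt{-2nf(p)} = n|M|/|\partial M|$. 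Thus $M$ is isometric to a Euclidean ball of that radius; the converse is a direct computation on the ball, completing the characterization.

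The main obstacle is this last rigidity step --- upgrading the pointwise identity $\nabla^2 f = \tfrac1n g$ to a genuine isometry with a ball. The delicate part is showing that $f$ has a \emph{single} interior critical point and that the geodesics issuing from it remain minimizing until they meet $\partial M$, so that $f = f(p) + \tfrac{1}{2n}d(\cdot,p)^2$ holds on all of $M$ with no cut-locus obstruction; this is a Tashiro/Obata-type statement, and it is where actual Riemannian geometry, rather than the PDE bookkeeping of the first part, is needed. A secondary, more routine, nuisance is fixing the sign and normalization of the boundary mean-curvature term in the Reilly formula so that it agrees with the inner-normal convention used here, which is safely checked against the model ball.
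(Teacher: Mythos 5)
Your proposal is correct. Note that the paper does not actually prove this lemma: it is quoted as a special case of Theorem~1 of Ros's paper \cite{Ros87} (the Heintze--Karcher-type inequality $\int_{\partial M}\frac{n-1}{H}\,dA \ge n|M|$, with equality iff $M$ is a ball), and what you have written is essentially Ros's own argument, specialized directly to the constant lower bound on $H$ rather than passing through the Heintze--Karcher inequality first. The ingredients match: the torsion function $\Delta f=1$, $f|_{\partial M}=0$, the Reilly formula with vanishing tangential boundary terms, the pointwise trace inequality $|\nabla^2 f|^2\ge (\Delta f)^2/n$, the Cauchy--Schwarz step on $\partial M$ against $\int_{\partial M}\partial_\nu f=|M|$, and the Obata/Tashiro-type rigidity from $\nabla^2 f=\tfrac1n g$. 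Two small remarks. First, your observation that connectedness is genuinely needed is right (a disjoint union of congruent balls satisfies the hypothesis), and since Lemma~2.1 of the paper guarantees $\partial M$ is connected in the intended application, this causes no trouble there. Second, the final rigidity step you flag as delicate can be shortened: once $\nabla^2 f=\tfrac1n g$ and $|\nabla f|^2=\tfrac2n f+a^2$, the flow of $\nabla f/|\nabla f|^2$ identifies $M\setminus\{p\}$ with $(0,na]\times\partial M$ and the level sets $\{f=c\}$ are totally umbilic with shape operator $\tfrac1{nr}\,\mathrm{id}$, which gives the warped-product (hence flat cone, hence Euclidean) structure without separately arguing that geodesics from $p$ are minimizing; but your route via geodesic polar coordinates also works.
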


%%%%%%%%%%%%%%%%%%%%%%
%Section 3 Proof of Theorem 1.1
%%%%%%%%%%%%%%%%%%%%%%

\section{Proof of Theorem 1.1}

In this section, we give the proof of Theorem 1.1. We first prove the upper bound in (1.1). Fix any point $x$ in the interior of $M$, there exists a geodesic $\gamma:[0,\ell] \to M$ parametrized by arc length such that $\ell=d(x,\partial M)$ (the existence of such geodesic follows from the completeness of $M$). Note that $\gamma$ lies in the interior of $M$ except at $\gamma(\ell)$. We want to prove that $\ell \leq \frac{1}{k}$. The first variation formula tells us that $\gamma'(\ell)$ is orthogonal to $\partial M$ at $\gamma(\ell)$. Moreover, the second variation of $\gamma$ for any normal vector field $V$ along $\gamma$ where $V(0)=0$ is nonnegative:
\begin{equation}
\delta^2 \gamma(V,V)= \int_0^\ell \left(|V'(s)|^2 - |V(s)|^2 K(\gamma'(s),V(s))\right) \; ds + \langle \overline{\nabla}_{V(\ell)} V(\ell), \gamma'(\ell) \rangle \geq 0. 
\end{equation} 
Fix an orthonormal basis $e_1,\ldots, e_{n-1}$ for $T_{\gamma(\ell)} \partial M$, let $E_i(s)$ be the parallel translate of $e_i$ along $\gamma$. Define $V_i(s)=\frac{s}{\ell} E_i(s)$. Substitute into (3.1) and sum over $i$ from $1$ to $n-1$,
\begin{equation}
\sum_{i=1}^{n-1} \delta^2 \gamma(V_i,V_i)=\int_0^\ell \left( \frac{n-1}{\ell^2} - \left(\frac{s}{l} \right)^2 \text{Ric}(\gamma'(s),\gamma'(s)) \right)\; ds -H_{\gamma(\ell)} \geq 0.
\end{equation}
Since Ric$ \geq 0$ and $H \geq (n-1)k>0$, (3.2) implies that $\frac{n-1}{l} \geq (n-1)k$. Therefore, $\ell \leq \frac{1}{k}$. Since the point $x$ is arbitrary, we have proved inequality (1.1).

Assume now that $\partial M$ is compact, then (1.1) implies that $M$ is compact. Suppose equality holds in (1.1). By rescaling the metric of $M$, we can assume that $k=1$. Then we want to prove that $M^n$ is isometric to the $n$-dimensional Euclidean unit ball. Since $M$ is compact, there exists some $x_0$ in the interior of $M$ such that 
\begin{equation}
d(x_0,\partial M)=1.
\end{equation}
The key step is to show that $M$ is equal to the geodesic ball of radius $1$ centered at $x_0$, denoted by $B_1(x_0)$. From (3.3), it is clear that $B_1(x_0)$ is contained in $M$. Let $\rho=d(x_0,\cdot)$ denote the distance function from $x_0$. Since $M$ has nonnegative Ricci curvature, the Laplacian comparison theorem gives 
\begin{equation}
\overline{\Delta} d \leq \frac{n-1}{d},
\end{equation}
where $\overline{\Delta}$ is the Laplacian operator on $M$, and $d=d(x, \cdot)$ is the distance function in $M$ from any point $x$.

Let $S=\{q \in \partial M: \rho(q)=1\}$. We claim that $S=\partial M$. To prove the claim, it suffices to show that $S$ is an open and closed subset of $\partial M$, since $\partial M$ is connected by Lemma 2.1. Note that $S$ is closed by continuity of $\rho$. It remains to prove that $S$ is open in $\partial M$. Pick any point $q \in S$, we will show that $\rho \equiv 1$ in a neighborhood of $q$ in $\partial M$. If $q$ is not a conjugate point to $x_0$ in $M$, then the geodesic sphere $\partial B_1(x_0)$ is a smooth hypersurface near $q$ in $M$, whose mean curvature with respect to the inner unit normal is at most $n-1$ by the Laplacian comparison theorem (3.4). On the other hand, $\partial M$ has mean curvature at least $n-1$ with respect to the inner unit normal by assumption. The maximum principle for hypersurfaces in manifolds \cite{Eschenburg89} implies that $\partial M$ and $\partial B_1(x_0)$ coincide in a neighborhood of $q$. Hence, $\rho \equiv 1$ in a neighborhood of $q$. Therefore, $S$ is open near any $q$ which is not a conjugate point to $x_0$ in $M$. If $q$ is a conjugate point of $x_0$, we want to show that $\Delta \rho \leq 0$ in the barrier sense \cite{Calabi58} in a neighborhood $q$, where $\Delta$ is the Laplacian operator on $\partial M$. Since $q$ is a minimum of $\rho$, we can then apply the strong maximum principle in \cite{Calabi58} for superharmonic function in the barrier sense to conclude that $\rho \equiv 1$ near $q$ in $\partial M$. To see why $\rho$ is superharmonic in $\partial M$. Let $\epsilon>0$ be any small constant and $p$ be any point on $\partial M$ near $q$. We have to find an upper barrier $\rho_\epsilon$ which is $C^2$ in a neighborhood of $p$ in $\partial M$, i.e. $\rho_\epsilon (p)=\rho(p)$ and $\rho_\epsilon \geq \rho$ in a neighborhood of $p$ in $\partial M$. Let $\gamma:[0,1] \to M$ be a minimizing geodesic from $x_0$ to $p$ parametrized by arc length. Let  $\delta >0$ be a small constant to be fixed later, and define 
\begin{equation*}
\rho_\delta(\cdot)=\delta + d(\gamma(\delta),\cdot),
\end{equation*}
which is smooth in a neighborhood of $p$. Notice that $\rho_\delta(p)=\rho(p)$ and $\rho_\delta \geq \rho$ in a neighborhood of $p$ by the triangle inequality. By the Laplacian comparison theorem (3.4), we have 
\begin{equation}
\overline{\Delta} \rho_\delta \leq \frac{n-1}{d(\gamma(\delta),\cdot)}=\frac{n-1}{\rho_\delta -\delta}.
\end{equation}
On a neighborhood of $p$ in $\partial M$, we have 
\begin{equation}
\Delta \rho_\delta = \overline{\Delta} \rho_\delta +H \frac{\partial \rho_\delta}{\partial N} - \text{Hess } \rho_\delta(N,N),
\end{equation}
where $N$ is the inner unit normal of $\partial M$ with respect to $M$, $H$ is the mean curvature of $\partial M$ with respect to $N$ and Hess $\rho_\delta$ is the Hessian of $\rho_\delta$ in $M$. Observe that 
\begin{equation*}
\rho_\delta(p)=\rho(p), \qquad \frac{\partial \rho_\delta}{\partial N}(p)=-1 \qquad \text{and} \qquad \text{Hess } \rho_\delta (N,N) (p) =0.
\end{equation*}
Choose a neighborhood $U \subset \partial M$ of $q$ such that for any $p \in U$ and $\delta>0$ sufficiently small, we have
\begin{equation}
\rho_\delta \geq \rho \geq 1, \qquad  \frac{\partial \rho_\delta}{\partial N} \geq -1+\delta \qquad \text{and} \qquad \text{Hess } \rho_\delta (N,N) \geq -\delta
\end{equation}
on the neighborhood $U$. By assumption, $H \geq n-1$, we see from (3.5), (3.6) and (3.7) that in the neighborhood $U$ around $p$,
\begin{equation*}
\Delta \rho_\delta \leq \frac{n-1}{1 -\delta} -(1-\delta)(n-1) + \delta \leq \epsilon
\end{equation*}
if $\delta$ is sufficiently small. Since $\epsilon$ is arbitrary, this shows that $\rho$ is superharmonic near $q$ in the barrier sense and attains a local minimum at $q$. Therefore, $\rho$ is constant near $q$ by the maximum principle of \cite{Calabi58}. This proves the claim that $S=\partial M$.

Now, we have shown that $M=B_1(x_0)$, the geodesic ball of radius $1$ centered at $x_0$ in $M$. We first note that $\rho$ is smooth up to the boundary $\partial M$. This is true since any $q \in \partial M$ can be joined by a minimizing geodesic $\gamma$ of unit length from $x_0$ to $q$. As $\partial M=\partial B_1(x_0)$, $\gamma$ is orthogonal to $\partial M$ at $q$, hence is uniquely determined by $q$. Therefore, $q$ is not in the cut locus of $x_0$. Since $M$ has nonnegative Ricci curvature, the Laplacian comparison (3.4) for $\rho=d(x_0,\cdot)$ holds in the classical sense, that is,
\begin{equation}
\rho \overline{\Delta} \rho \leq n-1.
\end{equation}
Since $|\overline{\nabla} \rho|=1$ on $M$, $\rho \equiv 1$ and $\frac{\partial \rho}{\partial \nu}=1$ on $\partial M$, where $\nu=-N$ is the outer unit normal of $\partial M$, integrating (3.8) over the whole manifold $M$ and applying Stokes theorem, we get
\begin{equation*}
|\partial M| -|M|=\int_{\partial M} \rho \frac{\partial \rho}{\partial \nu} - \int_M |\overline{\nabla} \rho|^2=\int_M \rho \overline{\Delta} \rho \leq \int_M (n-1)=(n-1)|M|.
\end{equation*}
This implies that 
\begin{equation*}
\frac{1}{n} \frac{|\partial M|}{|M|} \leq 1.
\end{equation*}
Since the mean curvature of $\partial M$ satisfies $H \geq n-1$, by Lemma 2.2, $M$ is isometric to a Euclidean ball of radius $r$. It is clear that $r=1$ as $M=B_1(x_0)$. This completes the proof of Theorem 1.1.

\bibliographystyle{amsplain}
\bibliography{references}

\end{document}